\documentclass[oneside]{amsart}
\usepackage{amsmath, amssymb}
\usepackage{amsthm}
\newtheorem{theorem}{Theorem}[section]

\newtheorem{lemma}[theorem]{Lemma}
\newtheorem{corollary}[theorem]{Corollary}

\theoremstyle{definition}

\begin{document}

\title[Toral or non locally connected minimal sets]
{Toral or non locally connected minimal sets for  
$R$-closed surface homeomorphisms}
\author{Tomoo Yokoyama}
\date{\today}
\email{yokoyama@math.sci.hokudai.ac.jp}

\thanks{The author is partially supported 
by the JST CREST Program at Department of Mathematics,  
Hokkaido University.}

\maketitle

\begin{abstract} 
Let $M$ be an orientable connected closed surface 
and 
$f$ be an $R$-closed homeomorphism on $M$ 
which is isotopic to identity. 
Then 
the suspension of $f$ satisfies 
one of the following condition:  
1) the closure of each element of it is toral. 
2) there is a minimal set which is not locally connected. 
Moreover,  
we show that 
any positive iteration of an $R$-closed 
homeomorphism on a compact metrizable space is 
$R$-closed. 
\end{abstract}


\section{Preliminaries}
In this paper, 
we will show that 
if $f$ is a nontrivial $R$-closed homeomorphism on $M$ 
which is not periodic but isotopic to identity, 
then 
either a) $f$ is ``an irrational rotation'' 
or b) there is a minimal set which is not locally connected. 
Moreover 
if $M$ has genus $\geq 2$, then b) holds.
Taking suspensions, 
we show that 
a) implies that 
each orbit closure is a torus. 
In addition, 
we show that 
any positive iteration of an $R$-closed 
homeomorphism on a compact metrizable space is 
$R$-closed.

For a subset $U$ of a topological space, 
$U$  
is locally connected 
if every point of $U$ admits a neighborhood basis 
consisting of open connected subsets. 
For a (binary) relation $E$ on a set $X$ 
(i.e. a subset of $X \times X$), 
let $E(x) := \{ y \in X \mid (x,  y) \in  E\}$ 
for an element $x$ of $X$. 
For a subset $A$ of $X$, 
we say that 
$A$ is $E$-saturated if 
$A = \cup_{x \in A} E(x)$. 
Also 
$E$ define the relation $\hat{E}$ on 
$X$ with $\hat{E} (x) = \overline{E(x)}$. 
Recall that 
$E$ is pointwise almost periodic if 
$\hat{E}$ is an equivalence relation 
and 
$E$ is $R$-closed if 
$\hat{E}$ is closed. 
For an equivalence relation $E$, 
the collection of equivalence classes 
$\{E(x) \mid x \in X\}$ is a decomposition of X, 
denoted by $\mathcal{F} _E$. 
By a decomposition, 
we mean a family $\mathcal{F} $ of pairwise disjoint subsets of a set $X$ 
such that $X = \sqcup \mathcal{F}$. 
For a homeomorphism $f$ on $X$, 
let $E_f$ be the equivalent relation by 
$E_f (x) := \{ f^k(x) \mid k \in \mathbb{Z}  \}$. 
$f$ is said to be $R$-closed if 
$E_f$ is $R$-closed. 
Then $f$ is $R$-closed if and only if 
 $R := \{ (x, y) \mid y \in \overline{O_f(x)} \}$ is closed. 
Note 
that 
$f$ on a locally compact Haudorff space  is pointwise almost periodic 
if and only if 
$\hat{E}_f$ is an equivalent relation 
(cf. Theorem 4.10 \cite{GH}).  
We call that 
an equivalence relation $E$ is $L$-stable 
if 
for an element $x$ of $X$  
and 
for any open neighborhood 
$U $ of $\hat{E} (x)$,  
there is a $E$-saturated open neighborhood  $V$ of $\hat{E} (x)$ 
contained in $U$. 
In \cite{ES}, 
they show the following: 
If a continuous mapping $f$ of a topological space $X$ 
in itself 
is either pointwise recurrent or 
pointwise almost periodic,  
then so is $f^k$ for each positive integer $k$. 
In general cases, 
see Theorem 2.24, 4.04, and 7.04 \cite{GH}. 
We show 
the following key lemma 
which is 
an $R$-closed version of this fact  
on a compact metrizable space.

\begin{lemma}\label{lem:001}
Let $f$ an homeomorphism on a 
compact metrizable space $X$. 
If $f$ is $R$-closed, 
then 
so is $f^n$ for any $n \in \mathbb{Z} _{>0}$.  
\end{lemma}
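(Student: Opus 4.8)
The plan is to set $g := f^{n}$ and reduce the claim to the $L$-stability of the decomposition of $X$ into $g$-orbit closures. First I record the structural facts I will use. Since $f$ is $R$-closed, $\hat{E}_f$ is a closed equivalence relation, so $f$ is pointwise almost periodic and its orbit closures form a decomposition $\mathcal{F}_f$; moreover on the compact metric space $X$ this decomposition is $L$-stable (upper semicontinuous), and conversely $L$-stability of a decomposition into compact sets is equivalent to the associated relation being closed. By the cited result (\cite{ES}, and Theorems 2.24, 4.04, 7.04 of \cite{GH}) $g$ is again pointwise almost periodic, so the $g$-orbit closures form a decomposition $\mathcal{F}_g$ refining $\mathcal{F}_f$. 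Hence it suffices to prove that $\mathcal{F}_g$ is $L$-stable, i.e. that $\hat{E}_g$ is closed, which is exactly $R$-closedness of $g$.

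Fix $x \in X$ and write $N_i := \overline{O_g(f^i x)}$ for $i \in \mathbb{Z}$, each a $g$-orbit closure, and $M := \overline{O_f(x)}$. Since $O_f(x) = \bigcup_{i=0}^{n-1} O_g(f^i x)$, taking closures gives $M = \bigcup_{i=0}^{d-1} N_i$, where $d$ is the number of distinct sets among the $N_i$; as $N_0$ is $g$-invariant we have $f^n(N_0) = g(N_0) = N_0$, and $f(N_i)=\overline{O_g(f^{i+1}x)}=N_{i+1 \bmod d}$, so $f$ cyclically permutes $N_0,\dots,N_{d-1}$ with $d \mid n$. Since $g$ is pointwise almost periodic the distinct $N_i$ are pairwise disjoint and compact. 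Given an open $U \supseteq N_0$, I will build a $g$-saturated open $V$ with $N_0 \subseteq V \subseteq U$. Choose disjoint open sets $U_0,\dots,U_{d-1}$ with $N_i \subseteq U_i$, $U_0 \subseteq U$, and pairwise disjoint closures. Put $W := \bigcup_{i=0}^{d-1}\big(U_i \cap f^{-1}(U_{(i+1)\bmod d})\big)$, an open set containing $M$ (if $z \in N_i$ then $f(z) \in N_{i+1} \subseteq U_{i+1}$). By $L$-stability of $\mathcal{F}_f$ there is an $f$-saturated open set $V_f$ with $M \subseteq V_f \subseteq W$.

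The point of the choice of $W$ is that it forces a phase structure on $V_f$: writing $P_i := V_f \cap U_i$, the $P_i$ partition $V_f$ (as $V_f \subseteq W \subseteq \bigsqcup_i U_i$), and $z \in P_i$ implies $z \in f^{-1}(U_{i+1})$, hence $f(z) \in U_{i+1} \cap V_f = P_{i+1}$; thus $f(P_i) \subseteq P_{i+1 \bmod d}$. Since $f$ is a bijection of $V_f$ and the $P_i$ form a finite partition, each inclusion is an equality, so $f^{\pm 1}$ cyclically permute the $P_i$ and $f^k(z) \in P_{k \bmod d}$ for every $k \in \mathbb{Z}$ and $z \in P_0$. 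Because $d \mid n$, this yields $g^k(z)=f^{nk}(z) \in P_0$ for all $k$, i.e. $O_g(z) \subseteq P_0 \subseteq U_0$. As $V_f$ is $f$-saturated we have $\overline{O_g(z)} \subseteq \overline{O_f(z)} \subseteq V_f$, and, being contained in $\overline{U_0}$, the set $\overline{O_g(z)}$ meets no $U_i$ with $i \neq 0$, so $\overline{O_g(z)} \subseteq V_f \cap U_0 =: V$. Hence $V$ is open, $g$-saturated, and $N_0 \subseteq V \subseteq U$, proving $L$-stability of $\mathcal{F}_g$ at $N_0$; as $x$ was arbitrary, $\hat{E}_g$ is closed and $g=f^n$ is $R$-closed.

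I expect the main obstacle to be exactly the step that converts the finite cyclic permutation of the pieces $N_0,\dots,N_{d-1}$ into control of the full bi-infinite $g$-orbit: a priori $f$ need not be equicontinuous, so knowing only $f(N_i)=N_{i+1}$ does not by itself prevent a nearby orbit from drifting between phases. This is resolved by not working with arbitrary neighborhoods but with the $f$-saturated set $V_f$ supplied by $L$-stability of $\mathcal{F}_f$ and shrunk inside $W$: its $f$-invariance together with the one-step trapping $f(P_i)\subseteq P_{i+1}$ propagates to all iterates. The remaining points — that $R$-closedness gives $L$-stability of $\mathcal{F}_f$, and that $L$-stability of $\mathcal{F}_g$ gives back closedness of $\hat{E}_g$ — are the standard equivalence between closed equivalence relations and upper semicontinuous decompositions on a compact metric space.
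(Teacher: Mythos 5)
Your proof is correct, and while it shares the paper's overall architecture --- pointwise almost periodicity of $f^n$ via Erd\H{o}s--Stone \cite{ES}, and the equivalence from \cite{Y} between $R$-closedness and $L$-stability of the decomposition into orbit closures, used in both directions --- the key construction is genuinely different. The paper splits $\hat{E}(x)$ into only two blocks, $\hat{E}_1 = \hat{E}^n(x)$ and the union $\hat{E}'$ of the remaining $f^n$-classes, surrounds them by metric $\varepsilon$-balls, and uses compactness together with (uniform) continuity of $f^n$ to choose $\delta$ so that $f^n(V_{1,\delta}) \subseteq U_{1,\varepsilon}$ and $f^n(V'_{\delta}) \subseteq U'_{\varepsilon}$; disjointness then forces $f^n(V_{1,\delta}) = V_{1,\delta}$, which is the desired $E^n$-saturated neighborhood. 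You instead retain the full cyclic structure $N_0, \dots, N_{d-1}$ and encode the one-step transition into the neighborhood $W = \bigcup_i \bigl(U_i \cap f^{-1}(U_{(i+1)\bmod d})\bigr)$ \emph{before} invoking $L$-stability; the $f$-saturated $V_f \subseteq W$ then satisfies $f(P_i) = P_{(i+1)\bmod d}$ for purely combinatorial reasons (finite partition of an invariant set plus bijectivity), so no metric, no $\varepsilon$--$\delta$ choice, and no uniform continuity are needed. This buys two things: your argument is purely topological, so it would transfer to any setting where the cited equivalences of \cite{Y} hold, and it treats the cases $\hat{E}^n(x) = \hat{E}(x)$ and $\hat{E}^n(x) \neq \hat{E}(x)$ uniformly (as $d = 1$ versus $d > 1$), whereas the paper must handle them separately. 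The paper's metric version is shorter at the final step but needs the invariance and compactness of $\hat{E}_1$ and $\hat{E}'$ to run the continuity argument; your trapping trick replaces exactly that step, and your closing discussion correctly identifies this as the crux.

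One small blemish: the clause ``as $V_f$ is $f$-saturated we have $\overline{O_g(z)} \subseteq \overline{O_f(z)} \subseteq V_f$'' is both unnecessary and not immediate as stated. Saturation in the paper's sense makes $V_f$ a union of \emph{orbits}, not orbit closures, so the second inclusion requires an extra observation (for pointwise almost periodic $f$, $w \in \overline{O_f(z)}$ implies $z \in \overline{O_f(w)}$ by symmetry of $\hat{E}$, so $O_f(w)$ meets the open invariant set $V_f$, whence $w \in V_f$). Since $E^n$-saturation of $V$ only requires $O_g(z) \subseteq V$, which you already obtained from $g^k(z) \in P_0$ for all $k$, you can simply delete the closure clause and the hypothesis that the $U_i$ have pairwise disjoint closures.
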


\begin{proof}
Put 
$E := E_{f}$ 
and $E^n := E_{f^n}$. 
By Corollary 1.5 \cite{Y}, 
we have that 
$\hat{E}$ is an equivalence relation 
 and so $f$ are pointwise almost periodic. 
Since $f$ is pointwise almost periodic, 
by Theorem 1\cite{ES}, 
we have that 
$f^n$ is also pointwise almost periodic.  
Then $\hat{E}^n$ is an equivalence relation. 
By Corollary 3.6\cite{Y}, 
$E$ is $L$-stable 
and 
it suffices to show that 
$E^n$ is $L$-stable. 
Note that 
$E^n (x) \subseteq E (x) $ and so 
$\hat{E}^n (x)  \subseteq \hat{E} (x) $. 
For $x \in X$ with $\hat{E}^n (x) = \hat{E} (x)$ and 
for any open neighborhood 
$U $ of $\hat{E} (x) = \hat{E}^n (x)$, 
since $E$ is $L$-stable, 
there is a 
$E$-saturated open neighborhood  $V$ of $\hat{E} (x)$ 
contained in $U$. 
Since $E^n (x)  \subseteq E (x) $, 
we have that 
$V$ is also 
a ${E^n}$-saturated open neighborhood  $V$ of $\hat{E} (x)$. 
Fix any $x \in X$ with $\hat{E} (x) \neq \hat{E}^n (x)$. 
Put $\{ \hat{E}_1, \dots , \hat{E}_k \} := 
\{ \hat{E} (f^k (x)) \mid k = 0, 1, \dots , n-1 \}$ 
such that 
$\hat{E}_1 = \hat{E}^n (x)$ and 
$\hat{E}_i \cap \hat{E}_j = \emptyset$
 for any $i \neq j \in \{1, \dots , k\}$. 
Let $\hat{E}' = \hat{E}_2 \sqcup \dots \sqcup \hat{E}_k$. 
Then 
$\hat{E}_1$ and $\hat{E}'$ are closed
and 
$\hat{E} 
= \hat{E}_1 \sqcup \cdots \sqcup \hat{E}_k 
= \hat{E}_1 \sqcup \hat{E}'$. 
For any sufficiently small $\varepsilon  > 0$, 
let 
$U_{1, \varepsilon } = B_{\varepsilon } (\hat{E}_1)$ 
(resp. $U_{\varepsilon }' = B_{\varepsilon } (\hat{E}')$) 
be the open $\varepsilon $-ball of $\hat{E}_1$ 
(resp. $\hat{E}'$). 
Since $\varepsilon $ is small and 
$X$ is normal,  
we obtain 
$U_{1, \varepsilon } \cap U_{\varepsilon }' = \emptyset$,  
$\overline{U_{1, \varepsilon /2}} \subseteq U_{1, \varepsilon }$,  
and 
$\overline{U_{\varepsilon/2}'} \subseteq U_{\varepsilon }'$. 
Since $E$ is $L$-stable, 
there are neighborhoods $V_{1, \varepsilon } \subseteq U_{1, \varepsilon/2}$ 
(resp. $V'_{\varepsilon } \subseteq U_{\varepsilon/2}'$) of $\hat{E}_1$ 
(resp. $\hat{E}'$) 
such that 
$V_{1, \varepsilon } \sqcup V'_{\varepsilon }$ is an $E$-saturated neighborhood of $\hat{E} (x)$. 
Since $\hat{E}_1$ and $\hat{E}'$ 
are $f^n$-invariant and compact, 
there is a small $\delta > 0$ such that 
$f^n(V_{1, \delta}) \subseteq U_{1, \varepsilon }$ and 
$f^n(V'_{\delta}) \subseteq U'_{\varepsilon }$. 
Since $V_{1, \delta} \sqcup V'_{\delta}$ is 
$f^n$-invariant and 
$U_{1, \varepsilon } \cap U'_{\varepsilon } = \emptyset$, 
we obtain 
$
V_{1, \delta} \sqcup V'_{\delta} 
= 
f^n(V_{1, \delta} \sqcup V'_{\delta}) 
= 
f^n(V_{1, \delta}) \sqcup f^n(V'_{\delta}) 
$, 
$f^n(V_{1, \delta}) \cap V'_{\delta} = \emptyset$, 
and 
$f^n(V'_{\delta}) \cap V_{1, \delta} = \emptyset$. 
Hence 
$
V_{1, \delta} = f^n(V_{1, \delta})$
and 
$V'_{\delta} = f^n(V'_{\delta})$. 
This implies that 
$V_{1, \delta} $
is an 
${E^n}$-saturated neighborhood of $\hat{E}_1 = \hat{E}^n (x)$ 
with 
$V_{1, \delta} \subseteq U_{1, \varepsilon } 
= B_{\varepsilon }(\hat{E}_1) 
= B_{\varepsilon }(\hat{E}^n ( x))$. 
\end{proof}

Note this lemma is not true for compact $T_1$ spaces. 
(e.g. 
a homeomorphism $f$ on  
a non-Hausdorff 1-manifold 
$X = \{ 0_-, 0_+ \} \sqcup ]0, 1]$   
by $f(0_{\pm}) = 0_{\mp}$ and 
$f|_{]0,1]} = \mathop{\mathrm{id}}$).  

%

\section{Main results}

From now on, 
let 
$M$ be an orientable connected closed surface
 and 
$f$ a nontrivial $R$-closed homeomorphism on $M$ 
which is not periodic but isotopic to identity. 
%
We call that 
$f$ on $S^2$ is a topological irrational rotation 
if there is an irrational number $\theta_0 \in  \mathbb{R} - \mathbb{Q} $ 
such that 
$f$ is topologically conjugate to 
a map on a unit sphere in $\mathbb{R} ^3$ with 
the Cylindrical Polar Coordinates by 
$(\rho, \theta, z) \to (\rho, \theta + \theta_0, z) 
\in \mathbb{R} _{\geq 0} \times S^1 \times \mathbb{R} $.  
Also 
$f$ on $\mathbb{T}^2$ is a topological irrational rotation 
if there is an irrational number $\theta_0 \in  \mathbb{R} - \mathbb{Q} $ 
such that 
some positive iteration of 
$f$ is topologically conjugate to 
a map $S^1 \times S^1 \to S^1 \times S^1$ by 
$(\theta, \varphi) \to (\theta + \theta_0, \varphi)$. 

\begin{lemma}\label{lem:00}
If every minimal set is locally connected, 
then 
$f$ is a topological irrational rotation on 
$M = S^2$ or $\mathbb{T}^2$. 
\end{lemma}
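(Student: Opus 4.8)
The plan is to exploit that an $R$-closed homeomorphism is pointwise almost periodic, so that $M$ is partitioned into the minimal sets $\hat{E}_f(x) = \overline{O_f(x)}$, and that this decomposition is $L$-stable (hence upper semicontinuous) by the results recalled in the proof of Lemma \ref{lem:001}. Under the standing hypothesis every block of this decomposition is a locally connected minimal continuum, and the first task is to classify such blocks. I would argue that a locally connected minimal set of a surface homeomorphism is necessarily one of three types: a finite set (a periodic orbit, in particular a fixed point), a simple closed curve, or a set with nonempty interior. The two-dimensional alternative collapses quickly: if a minimal set $\mu$ contains an open subset of $M$, then $\partial \mu$ is a proper closed invariant subset of $\mu$, so minimality forces $\partial \mu = \emptyset$ and hence $\mu = M$ by connectedness.

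First I would dispose of the degenerate possibilities. If every minimal set were a point, then every point would be fixed and $f = \mathrm{id}$, contrary to nontriviality; so at least one block is a circle or all of $M$. If $M$ is itself a minimal set, then $f$ is a minimal homeomorphism of a closed orientable surface, which forces $M = \mathbb{T}^2$; combining minimality with the $L$-stability of $E_f$ yields equicontinuity, and an equicontinuous minimal homeomorphism of $\mathbb{T}^2$ is topologically conjugate to a minimal translation, i.e. an irrational rotation of the torus, as required.

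The substantive case is when the blocks are finitely many periodic orbits together with a family of invariant circles. Here I would use $L$-stability to show that the circles vary continuously and assemble into a foliation of the complement of the periodic orbits, each circle carrying a minimal (hence irrational) rotation. The equicontinuity coming from $R$-closedness forces a common rotation number after conjugacy, and a Poincar\'e--Hopf index computation for this singular foliation by circles pins down the ambient surface: singularities compatible with nearby closed leaves are of center type (index $+1$), so the index sum equals $\chi(M)$, and saddles are excluded. The only possibilities are thus $M = S^2$ with exactly two singular fixed points (the poles, possibly interchanged by $f$, in which case I pass to the $R$-closed iterate $f^2$ via Lemma \ref{lem:001}) and $M = \mathbb{T}^2$ with no singular orbits; in particular genus $\geq 2$ is excluded. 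In each surviving case I would build the explicit conjugacy carrying the circle family to the latitudes of $S^2$, respectively to the fibers $\{\varphi = \mathrm{const}\}$ of $\mathbb{T}^2$, and sending $f$ (or its iterate) to the model rotation, thereby exhibiting $f$ as a topological irrational rotation.

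The main obstacle is the passage from the pointwise classification of the minimal blocks to the global coordinate model. Concretely, the delicate points are: upgrading $L$-stability to genuine continuity of the circle family so that it is a foliation with controlled singularities; verifying that the induced holonomy return maps are conjugate to a single rigid rotation, which is exactly where equicontinuity is essential; and carrying out the index computation robustly enough to cover the merely topological (non-smooth) setting. Constructing the global conjugacy to the standard model, rather than only identifying the surface, is the culminating technical step.
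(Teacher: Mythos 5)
Your proposal re-derives from scratch the classification machinery that the paper simply cites, and at the two places where you supply reasoning of your own, it breaks. First, your disposal of the degenerate case is incorrect: from the trichotomy of locally connected minimal sets the case to exclude is not ``every minimal set is a fixed point'' but ``every minimal set is finite,'' i.e.\ $f$ is pointwise periodic, possibly with unbounded periods; this does not give $f=\mathrm{id}$, and periodicity of $f$ is not immediate. The paper handles exactly this point via Montgomery's theorem \cite{M} (a pointwise periodic homeomorphism of a manifold is periodic), contradicting the standing assumption that $f$ is not periodic; without this step you cannot produce a circle minimal set at all. Relatedly, your trichotomy omits minimal sets that are finite disjoint unions of \emph{several} circles permuted by $f$ (this is what the cited classification of \cite{BNW} actually allows), which is precisely why the paper invokes Lemma \ref{lem:001} to pass to an $R$-closed iterate $f^n$ having a single invariant circle; your use of iterates only for the pole-swapping sphere case does not cover this.

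Second, the whole-surface branch of your argument fails on its own terms: for a minimal homeomorphism one has $\hat{E}_f(x)=M$ for every $x$, so $\hat{E}_f = M \times M$ is automatically closed and $L$-stability is vacuous ($M$ is the only saturated neighborhood of itself). Hence ``minimality plus $L$-stability yields equicontinuity'' is unfounded, and indeed minimal homeomorphisms of $\mathbb{T}^2$ need not be conjugate to translations (skew-product examples). Moreover, even if that step held, a minimal translation is \emph{not} a topological irrational rotation in the paper's sense: the model $(\theta,\varphi)\mapsto(\theta+\theta_0,\varphi)$ has circle orbit closures, and no iterate of a minimal translation has that form, so this branch would not establish the lemma's conclusion. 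The paper's proof never enters this branch because the classification it cites yields only finite sets or finite unions of circles. Finally, the substantive content of your plan --- identifying $M$ as $S^2$ or $\mathbb{T}^2$, showing the orbit closures form essential circles (resp.\ two fixed points and circles), and conjugating $f^n$ on the complementary open annulus to an irrational rotation --- is exactly what the paper imports from Theorem 2.4, Corollary 2.5, Lemma 1.5 and Lemma 2.1 of \cite{Y2}; you correctly flag these as the delicate points (continuity of the circle family, a common rotation number, a purely topological Poincar\'e--Hopf count, the global conjugacy) but prove none of them, and the equicontinuity input you lean on for the rotation number is again unsupported. As it stands the proposal is an outline with the hard steps missing and two incorrect reductions, not a proof.
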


\begin{proof} 
By Theorem 1 and Theorem 2 \cite{BNW}, 
since $f$ is pointwise almost periodic,  
every orbit closure is a finite subset or 
a finite disjoint union of simple closed curves. 
We will show that 
there is a finite disjoint union of simple closed curves.  
Otherwise $f$ is pointwise periodic. 
By \cite{M}, 
we have $f$  is periodic, which contradicts.    
By Lemma \ref{lem:001}, 
we have that 
$f^n$ is also $R$-closed for any $n \in \mathbb{Z} _{>0}$. 
Hence there is an positive integer $n$ such that 
$f^n$ has a simple closed curve as a minimal set. 
Then 
Theorem 2.4 \cite{Y2} implies that 
$M$ is either $\mathbb{T}^2$ or $S^2$. 
By Corollary 2.5 \cite{Y2}, 
if $M = S^2$, then $f$ has a null homotopic circle and so $n = 1$.  
Suppose $M = \mathbb{T}^2$ (resp. $S^2$). 
By Theorem 2.4\cite{Y2}, 
the set 
$\mathcal{F} _{\hat{E}_{f^n}}$ of orbits closures 
consists of essential circles 
(resp. two singular points and other circles). 
Fix any $x \in M$. 
Then $A := \mathbb{T}^2 - \hat{E}_{f^n} (x)$ 
(resp. $A := S^2 - \mathrm{Sing}({f^n})$) 
is an open annulus. 
By Lemma 1.5 
(resp. the proof of Lemma 2.1) \cite{Y2}, 
we have that 
the restriction $f^n|_A$ to 
the open annulus $A$ is 
an irrational rotation. 
\end{proof}

This implies  our main results. 

\begin{theorem}
Let $M$ be an orientable connected closed surface 
and 
$f$ be a nontrivial $R$-closed homeomorphism on $M$ 
which is not periodic but isotopic to identity. 
Then one of the following holds: 
\\
1) $f$ is a topological irrational rotation. 
\\ 
2) there is a minimal set which is not locally connected. 
\\
Moreover 
2) holds when $M$ has genus $\geq 2$.
\end{theorem}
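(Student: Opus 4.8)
The plan is to derive the theorem almost entirely from Lemma~\ref{lem:00}, using it as the engine that converts the local-connectivity hypothesis into the irrational-rotation conclusion. The theorem is a dichotomy, so I would structure the argument as a clean case split on whether every minimal set is locally connected.

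\medskip

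First I would observe that exactly one of two situations occurs: either every minimal set of $f$ is locally connected, or some minimal set fails to be locally connected. In the latter case, conclusion 2) holds by definition and there is nothing more to prove. So I would assume we are in the former case, that every minimal set is locally connected. This is precisely the hypothesis of Lemma~\ref{lem:00}, which under the standing assumptions on $f$ (nontrivial, $R$-closed, not periodic, isotopic to the identity) tells us that $f$ is a topological irrational rotation on $M = S^2$ or $M = \mathbb{T}^2$. Thus conclusion 1) holds, completing the dichotomy.

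\medskip

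For the "Moreover" clause, I would argue by contraposition against conclusion 1). If $M$ has genus $\geq 2$, then $M$ is neither $S^2$ (genus $0$) nor $\mathbb{T}^2$ (genus $1$). Consequently $f$ cannot be a topological irrational rotation, since by definition such a map lives only on $S^2$ or $\mathbb{T}^2$; in particular conclusion 1) is impossible. By the dichotomy just established, conclusion 2) must then hold: there is a minimal set which is not locally connected. Equivalently, one invokes the contrapositive of Lemma~\ref{lem:00} directly---if every minimal set were locally connected, $M$ would have to be $S^2$ or $\mathbb{T}^2$, contradicting genus $\geq 2$---so some minimal set is not locally connected.

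\medskip

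I do not expect a genuine obstacle here, since all the substantive work has been front-loaded into Lemma~\ref{lem:00} (and through it, Lemmas~\ref{lem:001} and the cited structural results of \cite{BNW}, \cite{M}, \cite{Y2}). The only point requiring mild care is ensuring that the standing hypotheses declared at the start of Section~2 are genuinely in force so that Lemma~\ref{lem:00} applies verbatim; since the theorem restates exactly those hypotheses, this is immediate. The proof is therefore essentially a two-line logical packaging of the lemma together with the genus observation.
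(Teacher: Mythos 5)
Your proposal is correct and matches the paper's approach exactly: the paper derives the theorem directly from Lemma~\ref{lem:00} (``This implies our main results''), i.e.\ the same dichotomy on whether every minimal set is locally connected, with the genus $\geq 2$ clause following because a topological irrational rotation exists only on $S^2$ or $\mathbb{T}^2$.
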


Taking a suspension, 
we have a following corollary. 

\begin{corollary}
Let $M$ be an orientable connected closed surface 
and 
$f$ be an $R$-closed homeomorphism on $M$ 
which is isotopic to identity. 
Then 
the suspension of $f$ satisfies 
one of the following condition:  
\\
1) the closure of each element of it is  toral. 
\\
2) there is a minimal set which is not locally connected. 
\end{corollary}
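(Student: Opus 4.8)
The plan is to reduce the corollary to the Theorem by working on the mapping torus $M_f := (M \times \mathbb{R})/\!\sim$, where $(x,t+1)\sim(f(x),t)$, equipped with the suspension flow $\phi_s([x,t]) := [x,t+s]$. The basic observation is that the $\phi$-orbit through $[x,0]$ is the image of $O_f(x)\times\mathbb{R}$, so its closure in $M_f$ is precisely the suspension of the $f$-orbit closure $\overline{O_f(x)}$; thus the closures of the elements of the suspension correspond bijectively to suspensions of the members of the orbit-closure decomposition $\mathcal{F}_{\hat{E}_f}$. Since here $f$ is only assumed $R$-closed and isotopic to the identity (not necessarily nontrivial or aperiodic), I would first dispose of the degenerate cases: if $f$ is the identity or periodic, then every $\overline{O_f(x)}$ is a finite set cyclically permuted by $f$, so its suspension is a single circle $\mathbb{T}^1$, which is toral, and alternative 1) holds.

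For the remaining case — $f$ nontrivial, not periodic, isotopic to the identity — I would invoke the Theorem, which splits into its two alternatives. If alternative 2) of the Theorem holds, there is a minimal set $N\subseteq M$ of $f$ that is not locally connected; its suspension $N_f\subseteq M_f$ is a minimal set of $\phi$, and because the suspension is locally a product with an interval, $N_f$ fails to be locally connected at each point of a fiber where $N$ does. This yields alternative 2) of the corollary directly.

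If alternative 1) of the Theorem holds, then $f$ is a topological irrational rotation on $S^2$ or $\mathbb{T}^2$, and I would compute the flow-orbit closures explicitly. Each orbit closure of $f$ is either a singular point or a finite family of circles $C_0,\dots,C_{k-1}$ cyclically permuted by $f$ with $f^k|_{C_0}$ conjugate to an irrational rotation. Over a fixed point the suspension is again a circle $\mathbb{T}^1$. Over such a permuted family the mapping torus is connected and homeomorphic to the mapping torus of the single irrational rotation $f^k|_{C_0}$, hence to $\mathbb{T}^2$; moreover the induced suspension flow is the linear flow of irrational slope, so each $\phi$-orbit is dense and its closure is the whole $\mathbb{T}^2$. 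In every case the closure of each element of the suspension is toral, giving alternative 1).

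I expect the main bookkeeping obstacle to be the irrational-rotation case: correctly identifying the mapping torus of a cyclically permuted family of invariant circles (rather than a single invariant circle) with $\mathbb{T}^2$, and verifying that the resulting suspension flow is minimal on it, so that each orbit closure is the full torus and not a proper subset. The transfer of non-local-connectedness through the suspension, while routine, likewise hinges on the observation that local connectedness of $N_f$ at a point forces it at the corresponding point of the fiber $N$.
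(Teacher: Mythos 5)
Your proposal is correct and follows the same route the paper intends: the paper derives the corollary from the Theorem simply by ``taking a suspension,'' and your argument is a full elaboration of exactly that reduction --- disposing of the periodic case (orbit closures suspend to circles $\mathbb{T}^1$), identifying the mapping torus of a cyclically permuted family of irrationally rotated circles with $\mathbb{T}^2$ carrying a minimal linear flow in alternative 1), and transferring failure of local connectedness through the local product structure $N\times(a,b)$ of the suspension in alternative 2). All three steps are sound as you describe them, so nothing further is needed.
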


%
%
%
%
%
%
%
%
%
%
%
%


\end{document}